\numberwithin{equation}{section}
\pgfplotsset{compat=1.18}
\setlist[enumerate]{itemsep=0.4em}
\definecolor{surgeryred}{RGB}{215,48,39}
\definecolor{recoverygray}{RGB}{240,240,240}
\definecolor{chemoblue}{RGB}{66,146,198}
\definecolor{radiationorange}{RGB}{230,145,56}
\definecolor{tmzpurple}{RGB}{118,42,131}
\definecolor{continuous}{RGB}{158,188,218}
\definecolor{intermittent}{RGB}{218,158,188}
\newtheorem{theorem}{Theorem}[section]
\newtheorem{corollary}{Corollary}[theorem]
\newtheorem{prop}{Proposition}
\newtheorem*{aim-non}{Aim}
\theoremstyle{definition}
\newtheorem{definition}{Definition}[section]
\newtheorem{remark}[definition]{Remark}
\begin{document}

	\title[Mathematical Model to Predict Growth and Treatment for UPS Cancer]{A Mathematical Model to Predict Growth and Treatment for UPS Cancer}

	\author[S. Roy]{Sumit Roy}
\address{Stat-Math Unit, Indian Statistical Institute, 203 B.T. Road, Kolkata 700 108, India.}
 \email{sumitroy\_r@isical.ac.in}
\thanks{Address: Stat-Math Unit, Indian Statistical Institute, 203 B.T. Road, Kolkata 700 108, India.}
\subjclass[2020]{92B05, 92C50, 34H05, 49N90}
\keywords{Undifferentiated Pleomorphic Sarcoma (UPS), Mathematical Oncology, Optimal Control, Hybrid Systems, Tumor-Immune Dynamics, Surgery Modeling.}

\begin{abstract}
We propose a mathematical model for the growth and treatment of Undifferentiated Pleomorphic Sarcoma (UPS) using a system of nonlinear differential equations. The model combines Gompertz-type tumor growth with surface-dependent necrotic loss, surgical resection with residual disease, postoperative recovery, tumor--immune interaction, and radiation treatment scheduling. We study the mathematical properties of the model and obtain several results. The growth equation shows the existence of a threshold below which the tumor cannot survive and may disappear. The postoperative phase exhibits an early inflammatory stage followed by proliferative recovery. For the tumor-immune subsystem, equilibrium states and local stability conditions are identified. The radiation treatment problem is formulated as an optimal control problem, and the optimal strategy is shown to be of bang-bang type. The model suggests that tumor recurrence depends not only on tumor growth itself but also on residual disease, postoperative dynamics, immune response, and treatment timing.
\end{abstract}

\dedicatory{Dedicated to my brother}
\maketitle

\section{Introduction}

This paper presents a mathematical model for studying the growth and
treatment dynamics of \textit{Undifferentiated Pleomorphic Sarcoma}
(UPS) (see \cite{Winchester2018}). We describe the basic growth of the tumor using a differential equation. This equation combines a standard Gompertz growth model (see \cite{Laird1964}, \cite{Wheldon1988}) with a term that accounts for cell death (necrosis) based on the tumor's surface area
\begin{equation*}
\frac{dV}{dt} = f(V) = r_{g}V \ln\left(\frac{K}{V}\right) - \lambda V^{2/3}. 
\end{equation*}
In this formula, $V(t)$ is the tumor volume at time $t$. $r_g$ is how fast the cells multiply, $K$ is the maximum size the body can support, and $\lambda$ is the rate at which cells die due to lack of nutrients. We prove that if the tumor becomes too small, the death rate becomes higher than the growth rate, and the tumor disappears completely in a short time. Results of this type illustrate how relatively simple growth models can still yield useful mathematical insight into tumor dynamics.

In recent years, mathematical oncology has moved beyond the use of classical growth laws alone and increasingly focused on models that incorporate treatment response, immune effects, and tumor microenvironmental factors. For example, Yin et al.~\cite{Yin2019}
surveyed mathematical models for tumor dynamics and treatment resistance in solid tumors, emphasizing the role of resistance
mechanisms in therapy response. Rockne et al.~\cite{Rockne2019Roadmap} outlined major directions in mathematical oncology, highlighting the growing role of mechanistic and predictive models in cancer research. More recently, Butner et al.~\cite{Butner2022} discussed mathematical models of cancer immunotherapy and their potential for clinically
translatable and personalized treatment strategies. Hybrid and multiscale approaches have also been developed to combine different
aspects of tumor growth and treatment within unified computational frameworks \cite{Chamseddine2020,SinghPaquin2024}. At the same time, recent studies in radiotherapy modeling emphasize that treatment outcome depends strongly on the underlying mathematical
description of radiation response and dose scheduling \cite{Kutuva2023,Zheng2025}. On the clinical side, recent reviews of
Undifferentiated Pleomorphic Sarcoma and myxofibrosarcoma have highlighted the importance of surgery, recurrence risk, radiotherapy, and emerging immune-based treatment strategies \cite{Crago2022,Dalal2024, Sun2023}.

These developments motivate the need for mathematically tractable models that connect tumor growth with surgery, postoperative recovery, treatment timing, and host response. However, much of the existing literature studies growth, immune interaction, radiotherapy, or treatment optimization separately. In the present work, we study a single analytical framework for UPS that combines intrinsic tumor growth, surgical resection, postoperative inflammatory recovery, treatment timing, and tumor-immune interaction. Here we construct a model that captures several key stages of tumor progression and treatment. Our model is divided into four main parts
\begin{enumerate}
    \item[(i)] \textit{Surgery Model}: If $V$ is the volume before surgery, the volume after surgery is $\mathcal{R}(V) = (1-\eta)V + \epsilon$. Here, $\eta$ is the fraction of tumor volume removed during surgery, and $\epsilon$ represents residual microscopic disease.
    
    \item[(ii)] \textit{Two-Phase Growth}: After surgery, the tumor goes through two steps. First is the \textit{Inflammatory Phase}. We found a specific formula to solve this
    \[
    V(t) = \left[\left(V_{res}^{1/3} - \frac{\kappa}{r}\right)e^{rt/3} + \frac{\kappa}{r}\right]^3.
    \]
    The second step is the \textit{Proliferative Phase}. 
    
    \item[(iii)] \textit{The Switch}: To move from the first phase to the second, we use a switching function $\phi(t)$. This function uses a chemical marker $[c]$ in the body that follows the rule $\frac{dc}{dt} = \beta - \mu c$. This ensures the transition in our model is smooth and realistic.
    
    \item[(iv)] \textit{Radiation Timing}: We use \textit{Pontryagin’s Minimum Principle} to find the best time to give radiation. Since the tumor's sensitivity to radiation changes over time ($\Psi(t)$), the best strategy is a bang-bang control (see \cite{Hall2018} for details). This means giving the maximum dose ($D_{max}$) when the tumor is most sensitive
    \[
    D^*(t) = \begin{cases} D_{max} & \text{if } \Psi(t) > \text{threshold} \\ 0 & \text{if } \Psi(t) < \text{threshold} .\end{cases}
    \]
\end{enumerate}

In addition to these, we also look at the interaction between the tumor ($V$) and the immune system ($E$) using the system
\begin{align*}
\frac{dV}{dt} &= rV \ln(K/V) - \frac{\delta E V}{m + V} \\
\frac{dE}{dt} &= s + \frac{\rho E V^2}{\eta^2 + V^2} - \mu E.
\end{align*}
We use the \textit{Routh-Hurwitz criterion} to prove when the immune system is strong enough to keep the tumor at a stable, small size.

Beyond the formulation of the model, the paper provides several analytical results concerning the resulting dynamical system. In particular, we identify conditions under which the necrosis term induces a minimum viable tumor size, derive explicit solutions for the inflammatory growth phase, and analyze the stability of the tumor-immune equilibrium using the Routh-Hurwitz criterion. Furthermore, we prove that the optimal radiation schedule is of
bang--bang type, showing that radiation should be administered at the maximum allowable rate during periods of high radiosensitivity (Theorem~\ref{thm:rt}). The present framework therefore provides a unified dynamical systems model for UPS that permits the mathematical analysis of recurrence thresholds, postoperative phase transitions, and treatment timing.

Finally, we compare the qualitative predictions of the model with clinical observations reported for UPS. In the future, we plan to add stochastic equations to the model to account for how every patient is slightly different.

Throughout the paper, all model parameters are assumed to be real and nonnegative unless otherwise stated. Their biological meaning is summarized in Table~\ref{tab:param_meaning}.
\begin{table}[H]
\centering
\caption{Model parameters and biological interpretation}
\label{tab:param_meaning}
\begin{tabular}{lll}
\toprule
Parameter & Mathematical assumption & Biological meaning\\
\midrule
$r_g$ & $>0$ & intrinsic tumor proliferation rate\\
$K$ & $>0$ & carrying capacity\\
$\lambda$ & $>0$ & necrotic loss coefficient\\
$\eta$ & $\in(0,1)$ & surgical resection efficiency\\
$\epsilon$ & $>0$ & residual tumor volume after surgery\\
$r_p$ & $>0$ & postoperative proliferation rate\\
$r_{\mathrm{inflam}}$ & $\ge0$ & inflammation-driven growth rate\\
$\delta_{\mathrm{immune}}$ & $\ge0$ & immune-mediated tumor clearance rate\\
$\kappa_{\mathrm{hypoxia}}$ & $\ge0$ & hypoxia-induced cell loss coefficient\\
$\alpha,\beta$ & $\ge0$ & radiosensitivity parameters\\
$\tau$ & $>0$ & radiosensitivity period\\
$D(t)$ & $\ge0$ & radiation dose rate\\
$D_{\max}$ & $>0$ & maximum allowable dose rate\\
$D_{\mathrm{total}}$ & $>0$ & total radiation budget\\
$\delta$ & $\ge0$ & adjuvant treatment kill rate\\
$s$ & $\ge0$ & baseline immune influx\\
$\rho$ & $\ge0$ & immune proliferation rate\\
$\mu$ & $>0$ & immune cell decay rate\\
$m$ & $>0$ & immune saturation constant\\
\bottomrule
\end{tabular}
\end{table}

\section{Tumor Growth Dynamics}
\label{sec:growth_model}
Consider the following tumor growth model where proliferation follows Gompertz kinetics and cell loss scales with surface area, i.e.

\begin{equation}
\frac{dV}{dt} = f(V)= \underbrace{r_g V \ln\left(\frac{K}{V}\right)}_{\text{proliferation}} - \underbrace{\lambda V^{2/3}}_{\text{necrosis}},
\label{eq:growth}
\end{equation}
where $V(t)$ represents tumor volume at time $t$, $r_g$ is the proliferation rate, $K$ is the carrying capacity, and $\lambda$ is the necrosis coefficient.

\begin{theorem}
Consider the tumor growth model \eqref{eq:growth} with $\lambda > 0$. 
\begin{enumerate}
    \item[(i)] 
    If the tumor volume $V(t)$ enters the regime where $$r_g V \ln\left(\frac{K}{V}\right) < \lambda V^{2/3},$$ the trajectory $V(t)$ terminates at the boundary $V=0$ in finite time $t_{ext} < \infty$.
    \vspace{0.2cm}
    
    \item[(ii)] 
There exists $\delta>0$ such that, for every solution with initial
value $V(0)\in(0,K)$,
\[
V(t)\le K-\delta
\]
for all sufficiently large $t$. In particular,
\[
\limsup_{t\to\infty}V(t)\le K-\delta<K.
\]
\end{enumerate}
\end{theorem}

\begin{proof}
$(i)$ Let $$g(V) = r_g V^{1/3} \ln\left(\frac{K}{V}\right) - \lambda.$$ Since $\lim_{V \to 0^+} g(V) = -\lambda$, there exists a $\delta > 0$ such that for all $V \in (0, \delta)$, $$\frac{dV}{dt} = V^{2/3} g(V) \leq -\frac{\lambda}{2} V^{2/3}.$$ Now
\[
\frac{d}{dt}\bigl(V(t)^{1/3}\bigr) = \frac{1}{3}V(t)^{-2/3}\frac{dV}{dt} \le - \frac{\lambda}{6}.
\]
Hence, integrating from $0$ to $t$, we obtain
\[
V(t)^{1/3} \le V(0)^{1/3} - \frac{\lambda t}{6}.
\]
Thus, $V(t)$ must reach zero at some time $t_{ext} \leq \frac{6V(0)^{1/3}}{\lambda}$. 
\vspace{0.2cm}

$(ii)$ At $V=K$, we have
\[
f(K)=r_gK\ln(1)-\lambda K^{2/3}
=-\lambda K^{2/3}<0.
\]
By continuity, there exists $\delta>0$ such that
\[
f(V)<0
\qquad \text{for all }V\in[K-\delta,K].
\]
Since $[K-\delta,K]$ is compact, there exists a constant $c>0$ such that
\[
f(V)\le -c
\qquad \text{for all }V\in[K-\delta,K].
\]

Now suppose that a trajectory enters $[K-\delta,K]$. While it remains
in this interval, we have
\[
\frac{dV}{dt}=f(V)\le -c.
\]
Therefore the trajectory must leave the interval through the lower
endpoint $K-\delta$ in finite time. Once it has moved below
$K-\delta$, it cannot cross this level upward again, since
\[
f(K-\delta)<0.
\]
Thus every solution with initial value in $(0,K)$ eventually remains
below $K-\delta$. Consequently,
\[
\limsup_{t\to\infty}V(t)\le K-\delta<K.
\]
\end{proof}

\begin{remark}
Part $(i)$ of the above theorem represents the Minimum Viable Volume. In the present model, this refers to a threshold region below which the necrotic loss term dominates the proliferative growth term, causing the tumor volume to decrease toward extinction. Unlike the standard Gompertz model, where arbitrarily small tumor volumes may still grow, the present model suggests that sufficiently small tumors may fail to sustain growth and instead undergo clearance. Thus, the minimum viable volume may be viewed as a qualitative boundary between extinction and sustained tumor growth.
\end{remark}

Before studying stability, we briefly explain the role of linearization. If $V_\infty$ is an equilibrium of the nonlinear model
\[
\frac{dV}{dt}=f(V),
\]
then the behavior of nearby solutions can be understood by studying the linearized system around $V_\infty$. Local asymptotic stability means that solutions starting sufficiently close to $V_\infty$ approach this equilibrium as $t\to\infty$. Exponential convergence means that the distance to equilibrium decays at an exponential rate, i.e.,
\[
|V(t)-V_\infty|
\le
Ce^{-\alpha t}
\]
for some constants $C,\alpha>0$. The use of linearization near an equilibrium is standard and is supported by results such as the Hartman-Grobman theorem.

\begin{theorem}\label{thm_preop}
Let us consider the tumor growth model \eqref{eq:growth} and let $V_{\infty}$ be the non-trivial stable equilibrium\footnote{Stability of $V_{\infty}$ and the positivity of the convergence rate $\alpha$ are guaranteed provided the steady-state volume satisfies $V_{\infty} > Ke^{-3}$, ensuring $1 - \frac{1}{3}\ln(K/V_{\infty}) > 0$.}. Then the exponential convergence rate $\alpha$ for the linearized system is given by
\begin{equation}
\alpha = r_{g} - \frac{1}{3} \lambda V_{\infty}^{-1/3} = r_{g} \left[ 1 - \frac{1}{3} \ln \left( \frac{K}{V_{\infty}} \right) \right].
\end{equation}
\end{theorem}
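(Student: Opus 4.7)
The plan is to carry out the standard linearization of the autonomous ODE $\dot V = f(V)$ about the interior equilibrium $V_\infty$ and identify the exponential decay rate as $\alpha = -f'(V_\infty)$. The perturbation $u(t) = V(t) - V_\infty$ satisfies $\dot u = f'(V_\infty)\,u + O(u^2)$, so whenever $f'(V_\infty) < 0$ the linearized flow is $u(t) \sim e^{-\alpha t}$ with $\alpha = -f'(V_\infty) > 0$. The whole theorem therefore reduces to computing $f'(V_\infty)$ and rewriting it using the equilibrium condition.

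First I would differentiate $f(V) = r_g V \ln(K/V) - \lambda V^{2/3}$ term by term. The product/chain rule on the proliferation term gives $r_g\ln(K/V) - r_g$, where the $-r_g$ comes from $V \cdot \frac{d}{dV}\ln(K/V) = -1$; the power rule on the necrosis term gives $-\tfrac{2}{3}\lambda V^{-1/3}$. Hence
\begin{equation*}
f'(V) = r_g\bigl[\ln(K/V) - 1\bigr] - \tfrac{2}{3}\lambda V^{-1/3}.
\end{equation*}

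Next I would exploit the equilibrium identity. Setting $f(V_\infty) = 0$ and dividing by $V_\infty > 0$ produces the key relation
\begin{equation*}
r_g \ln(K/V_\infty) \;=\; \lambda V_\infty^{-1/3}.
\end{equation*}
Substituting this into the expression for $f'(V_\infty)$ collapses the proliferation contribution against part of the necrosis contribution and leaves $f'(V_\infty) = \tfrac{1}{3}\lambda V_\infty^{-1/3} - r_g$. Negating yields the first stated form $\alpha = r_g - \tfrac{1}{3}\lambda V_\infty^{-1/3}$, and one more application of the equilibrium identity, this time used to eliminate $\lambda V_\infty^{-1/3}$ in favor of $r_g \ln(K/V_\infty)$, gives the equivalent bracketed form $\alpha = r_g\bigl[1 - \tfrac{1}{3}\ln(K/V_\infty)\bigr]$.

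There is no real obstacle in this argument; the computation is elementary single-variable calculus, and the only bit of cleverness is the dual use of the equilibrium identity to interchange the two representations of $\alpha$. I would close the proof with a short remark that the bracketed form makes transparent the condition $\alpha > 0 \iff \ln(K/V_\infty) < 3 \iff V_\infty > K e^{-3}$ flagged in the footnote, which is exactly what is needed for the linearization to actually predict exponential \emph{decay} rather than growth and for $V_\infty$ to qualify as a stable equilibrium in the sense used in Theorem~1.1(ii).
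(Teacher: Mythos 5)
Your proposal is correct and follows exactly the same route as the paper's proof: compute $f'(V)$ by the product rule, evaluate at $V_\infty$ using the equilibrium identity $r_g\ln(K/V_\infty)=\lambda V_\infty^{-1/3}$ to get $f'(V_\infty)=\tfrac{1}{3}\lambda V_\infty^{-1/3}-r_g$, and set $\alpha=-f'(V_\infty)$, applying the identity once more for the bracketed form. Your closing remark tying the sign of $\alpha$ to the condition $V_\infty>Ke^{-3}$ is a nice touch that the paper relegates to the footnote.
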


\begin{proof}
The equilibrium $V_{\infty}$ is defined by the condition $f(V_{\infty}) = 0$, and for $V_{\infty} > 0$, this means
\begin{equation}\label{eq:eq_cond}
r_{g} \ln\left(\frac{K}{V_{\infty}}\right) = \lambda V_{\infty}^{-1/3}.
\end{equation}

We will linearize $f(V)$ to evaluate the stability. Using the product rule on the Gompertz term, we get
\begin{align*}
f'(V) &= \frac{d}{dV} \left[ r_{g} V (\ln K - \ln V) - \lambda V^{2/3} \right] \\
&= r_{g} \left[ (\ln K - \ln V) + V \left( -\frac{1}{V} \right) \right] - \frac{2}{3} \lambda V^{-1/3} \\
&= r_{g} \ln \left( \frac{K}{V} \right) - r_{g} - \frac{2}{3} \lambda V^{-1/3}.
\end{align*}

Therefore at the equilibrium point $V = V_{\infty}$, we get
\begin{equation*}
f'(V_{\infty}) = r_{g} \ln \left( \frac{K}{V_{\infty}} \right) - r_{g} - \frac{2}{3} \lambda V_{\infty}^{-1/3}.
\end{equation*}

Thus by substituting the equilibrium identity from \eqref{eq:eq_cond}, we obtain
\begin{align*}
f'(V_{\infty}) &= \left( \lambda V_{\infty}^{-1/3} \right) - r_{g} - \frac{2}{3} \lambda V_{\infty}^{-1/3} \\
&= \frac{1}{3} \lambda V_{\infty}^{-1/3} - r_{g}.
\end{align*}

To identify the convergence rate, we consider a small perturbation $x=V-V_\infty$. The linearized equation near $V_\infty$ is
\[
\dot x=f'(V_\infty)x.
\]
Hence
\[
x(t)\sim e^{f'(V_\infty)t}.
\]
For a stable equilibrium, we therefore define the exponential convergence rate by
\[
\alpha=-f'(V_\infty)>0,
\]
so that perturbations decay like
\[
x(t)\sim e^{-\alpha t}.
\]
Thus
\begin{equation}
\alpha = -f'(V_\infty)= r_{g} - \frac{1}{3} \lambda V_{\infty}^{-1/3}.
\end{equation}
Again substituting \eqref{eq:eq_cond}, we can also express this by
\begin{equation}
\alpha = r_{g} \left[ 1 - \frac{1}{3} \ln \left( \frac{K}{V_{\infty}} \right) \right]
\end{equation}
\end{proof}

\begin{remark}\label{rem:biology}
The model components describe distinct tumor growth dynamics: the proliferation term $r_g V \ln(K/V)$ captures the transition from quasi-exponential expansion to decelerated growth as the tumor volume $V$ approaches the carrying capacity $K$, reflecting resource depletion and spatial constraints. The inhibitory term $-\lambda V^{2/3}$ models surface-area dependent effects; specifically, it accounts for nutrient diffusion limitations where the viable proliferating fraction is restricted to the tumor's outer shell, eventually leading to hypoxic core formation in tumors larger than 1 cm$^3$. Table~\ref{tab:params} provides characteristic parameters for various malignancies, identifying glioblastoma as having the highest proliferation rate (0.143 day$^{-1}$), whereas UPS demonstrates aggressive metastatic potential despite intermediate primary growth rates \cite{Benzekry2014,Eilber2004,Lambin2012,Norton1988,Swanson2008}.
\end{remark}

\begin{figure}[h!]
\centering
\includegraphics[width=0.8\linewidth]{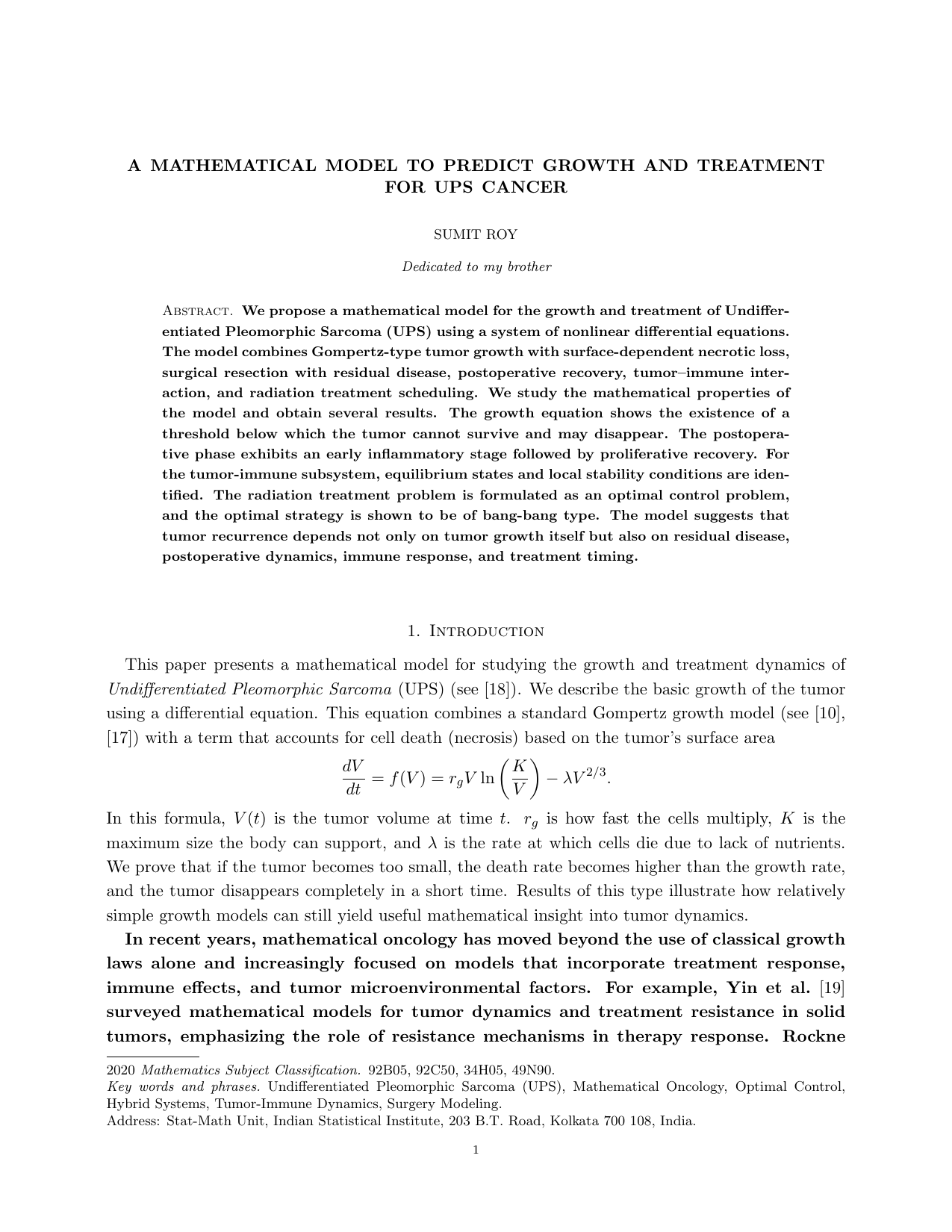}
\caption{Comparison of tumor growth trajectories predicted by the modified Gompertz--necrosis model for representative tumor types.}
\label{fig:growth}
\end{figure}

Figure~\ref{fig:growth} illustrates the tumor growth trajectories predicted by the modified Gompertz--necrosis model for several tumor types. The trajectories exhibit the typical sigmoidal behavior associated with Gompertzian growth. For small tumor volumes the proliferative term $r_g V \ln(K/V)$ dominates the necrotic loss term $\lambda V^{2/3}$, producing rapid early expansion. As the tumor volume increases, however, the logarithmic slowdown in the Gompertz component together with the surface-mediated loss term gradually reduces the net growth rate, causing the curves to bend and approach a slower long-term growth regime.

Differences between the trajectories reflect the biological variability between tumor types. In particular, tumors with larger effective proliferation rates exhibit steeper early growth, while those with stronger inhibitory effects display more gradual progression. The UPS trajectory lies among the more aggressive profiles in the simulation over the time interval considered, which is consistent with the clinically reported rapid progression of this tumor type.

\begin{table}[H]
\centering
\caption{Tumor growth parameters}\label{tab:params}
\begin{tabular}{lccc}
\toprule
Tumor Type & $r_g$ (day$^{-1}$) & $\lambda$ (day$^{-1}$) & $K$ (cm$^3$) \\
\midrule
Breast & 0.072 & 0.018 & 1200 \\
Glioblastoma & 0.143 & 0.027 & 800 \\
Prostate & 0.032 & 0.008 & 1500 \\
Sarcoma & 0.095 & 0.022 & 2500 \\
UPS & 0.108 & 0.026 & 1800 \\
\hline
\end{tabular}
\end{table}

Table~\ref{tab:params} lists representative parameter values used in the growth simulations, following standard Gompertz-type tumor growth models (see, for example, \cite{Benzekry2014, Laird1964,Norton1988}). The parameter $r_g$ represents the intrinsic proliferation rate of tumor cells, while $\lambda$ models cell loss due to necrosis and nutrient limitations near the tumor surface. The carrying capacity $K$ reflects environmental constraints on the maximum tumor volume that can be sustained. Variations in these parameters allow the model to reproduce the different growth patterns observed across tumor
types while preserving the same underlying dynamical structure.

\section{Surgical Resection Module}

\subsection{Mathematical Formulation and Analysis}

Let $\Omega = \{V \in \mathbb{R} : 0 < V < K\}$ denote the biologically relevant range of positive tumor volumes. In the postoperative analysis below, we restrict attention to trajectories that remain in the positive regime and evolve in a neighborhood of the positive postoperative equilibrium.

We denote by $t_{op}^+$ the time immediately after surgery, and write
\[
V(t_{op}^+)=V_{\mathrm{res}}
\]
for the residual tumor volume following surgical resection.

\begin{theorem}
\label{thm:postop_dynamics}
Let the postoperative tumor volume $V(t)$ be governed by the non-linear system
\begin{equation}
\frac{dV}{dt} = f_p(V) = r_p V \ln\left(\frac{K_{\text{post}}}{V}\right) - \lambda V^{2/3}, \quad V(t_{op}^+) = V_{\text{res}}
\end{equation}
where $r_p, K_{\text{post}}, \lambda > 0$. Assume that the postoperative system admits a positive equilibrium
$V_{\infty,p}\in(0,K_{\text{post}})$ satisfying
\[
r_p\ln\left(\frac{K_{\text{post}}}{V_{\infty,p}}\right)
=\lambda V_{\infty,p}^{-1/3}.
\]
The dynamics satisfy
\begin{enumerate}
    \item[(i)] 
    The equilibrium $V_{\infty, p}$ is locally asymptotically stable provided $V_{\infty, p} > K_{\text{post}}e^{-3}$.
    \vspace{0.15cm}
    
    \item[(ii)] 
    The local convergence rate $\alpha_p$ is given by:
    \begin{equation}
    \alpha_p = r_p \left( 1 - \frac{1}{3} \ln \left( \frac{K_{\text{post}}}{V_{\infty, p}} \right) \right).
    \end{equation}
  \item[(iii)] 
   For trajectories starting sufficiently close to $V_{\infty,p}$, the
linearized dynamics satisfy
\[
|x(t)| = |x(t_{op})| e^{-\alpha_p (t-t_{op})},
\qquad x(t)=V(t)-V_{\infty,p}.
\]
In particular, the nonlinear system exhibits local exponential decay toward $V_{\infty,p}$.
\end{enumerate}
\end{theorem}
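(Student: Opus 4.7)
The plan is to mirror the argument of Theorem \ref{thm_preop} with parameters $(r_p, K_{\text{post}})$ in place of $(r_g, K)$. All three conclusions reduce, at their core, to a scalar linearization at the post-operative equilibrium, so once existence of $V_{\infty,p}$ is in hand the work is mostly bookkeeping plus one nontrivial step for the global bound in (iii).

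First I would establish that the identity $r_p \ln(K_{\text{post}}/V) = \lambda V^{-1/3}$ obtained from $f_p(V) = 0$ admits a unique solution $V_{\infty,p} \in (0, K_{\text{post}})$. This is the post-operative analogue of \eqref{eq:eq_cond}; existence and uniqueness follow from the intermediate-value theorem applied to $\phi(V) := r_p \ln(K_{\text{post}}/V) - \lambda V^{-1/3}$ together with an inspection of its critical structure, exactly as implicit in the preoperative setting. For parts (i) and (ii) I would then recycle the linearization computation from Theorem \ref{thm_preop} almost verbatim: differentiating $f_p$ and substituting the equilibrium identity at $V_{\infty,p}$ yields
\[
f_p'(V_{\infty,p}) \;=\; \tfrac{1}{3}\lambda V_{\infty,p}^{-1/3} - r_p \;=\; r_p\Bigl[\tfrac{1}{3}\ln(K_{\text{post}}/V_{\infty,p}) - 1\Bigr].
\]
The hypothesis $V_{\infty,p} > K_{\text{post}} e^{-3}$ is precisely the statement that the bracket is negative, hence $f_p'(V_{\infty,p}) < 0$, which gives local asymptotic stability via Lyapunov's first method (claim (i)) and the stated closed form $\alpha_p = -f_p'(V_{\infty,p})$ (claim (ii)).

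The main obstacle is claim (iii), which asks for a uniform exponential bound with constant exactly $\alpha_p$ on the entire basin $\mathcal{B}(V_{\infty,p})$, whereas linearization only controls trajectories infinitesimally near the equilibrium. My approach is a scalar Lyapunov/comparison argument. Let $W(t) := (V(t) - V_{\infty,p})^2$; since $f_p(V_{\infty,p}) = 0$, the mean value theorem gives
\[
\dot W \;=\; 2(V - V_{\infty,p})\bigl(f_p(V) - f_p(V_{\infty,p})\bigr) \;=\; 2 f_p'(\xi(t))\, W
\]
for some $\xi(t)$ between $V(t)$ and $V_{\infty,p}$, so it would be enough to verify that $f_p'(\xi) \leq -\alpha_p$ throughout $\mathcal{B}(V_{\infty,p})$. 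Using the explicit second derivative $f_p''(V) = -r_p/V + \tfrac{2}{9}\lambda V^{-4/3}$ one can locate the unique zero of $f_p''$, check that the basin lies in the region where $f_p'$ is concave, and conclude that $V_{\infty,p}$ maximizes $f_p'$ on each monotone branch of the basin; Gr\"onwall's inequality then delivers the claimed $|V(t) - V_{\infty,p}| \le |V_{\text{res}} - V_{\infty,p}| e^{-\alpha_p (t - t_{op})}$. If this sign control of $f_p'$ fails globally, I would weaken (iii) to rate $\alpha_p - \epsilon$ on a slightly contracted sub-basin, which preserves the qualitative content of the theorem without altering the downstream use in the radiation-timing and phase-switching modules.
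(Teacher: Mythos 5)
Your treatment of parts (i) and (ii) coincides with the paper's: differentiate $f_p$, substitute the equilibrium identity $\lambda V_{\infty,p}^{-1/3} = r_p\ln(K_{\text{post}}/V_{\infty,p})$, obtain $f_p'(V_{\infty,p}) = \tfrac{1}{3}\lambda V_{\infty,p}^{-1/3} - r_p$, and read off stability and $\alpha_p = -f_p'(V_{\infty,p})$ from the sign condition $V_{\infty,p} > K_{\text{post}}e^{-3}$. (You also sketch existence/uniqueness of $V_{\infty,p}$, which the paper silently assumes; note that $\phi(V) = r_p\ln(K_{\text{post}}/V) - \lambda V^{-1/3}$ tends to $-\infty$ at both endpoints of $(0,K_{\text{post}})$, so a root exists only under a parameter condition and there are generically two roots, of which only the larger is stable --- your IVT sketch would need this refinement.)

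For part (iii) you diverge from the paper, and your diagnosis of the difficulty is more honest than the paper's treatment: the paper simply integrates the linearized equation $\dot x = -\alpha_p x$ exactly and then appeals to Hartman--Grobman, which yields only topological conjugacy and does not transfer the decay \emph{rate} to the nonlinear flow, let alone on the whole basin. Your mean-value/Gr\"onwall scheme via $W=(V-V_{\infty,p})^2$ is the right way to get a genuine quantitative bound. However, the specific verification you propose does not go through as stated: from $f_p''(V) = -r_p/V + \tfrac{2}{9}\lambda V^{-4/3}$ one finds that $f_p'$ increases on $(0,V_m)$ and decreases on $(V_m,\infty)$ with $V_m = \bigl(\tfrac{2\lambda}{9 r_p}\bigr)^{3}$, so $f_p'$ attains its maximum at $V_m$, not at $V_{\infty,p}$. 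Consequently the inequality $f_p'(\xi)\le f_p'(V_{\infty,p}) = -\alpha_p$ for the mean-value point $\xi$ can hold on at most one side of the equilibrium (the side pointing away from $V_m$); on the other side $\xi$ passes through values where $f_p' > -\alpha_p$, and the exact rate $\alpha_p$ fails. Your fallback --- rate $\alpha_p-\epsilon$ on a contracted neighbourhood --- is the correct and standard conclusion, and is in fact all that the paper's own argument legitimately establishes; the bound in (iii) as literally stated (exact constant $\alpha_p$, constant prefactor $1$, entire basin) is not proved by either argument and is false in general.
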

\begin{proof}
For the local stability, we linearize $f_p(V)$ about the steady state $V_{\infty, p}$. Let $x(t) = V(t) - V_{\infty, p}$ denote a small perturbation. Then the first-order expansion gives $\frac{dx}{dt} = f_p'(V_{\infty, p})x$. Therefore
\begin{align*}
f_p'(V) &= \frac{d}{dV} \left[ r_p V (\ln K_{\text{post}} - \ln V) - \lambda V^{2/3} \right] \\
&= r_p \ln \left( \frac{K_{\text{post}}}{V} \right) - r_p - \frac{2}{3} \lambda V^{-1/3}
\end{align*}
At the steady state $V_{\infty, p}$, by substitute the equilibrium identity $\lambda V_{\infty, p}^{-1/3} = r_p \ln\left(\frac{K_{\text{post}}}{V_{\infty, p}}\right)$, we obtain
\begin{align*}
f_p'(V_{\infty, p}) &= r_p \ln\left(\frac{K_{\text{post}}}{V_{\infty, p}}\right) - r_p - \frac{2}{3} \left( r_p \ln\left(\frac{K_{\text{post}}}{V_{\infty, p}}\right) \right) \\
&= \frac{1}{3} r_p \ln\left(\frac{K_{\text{post}}}{V_{\infty, p}}\right) - r_p \\
&= -r_p \left( 1 - \frac{1}{3} \ln \left( \frac{K_{\text{post}}}{V_{\infty, p}} \right) \right)
\end{align*}
By defining $\alpha_p = -f_p'(V_{\infty, p})$, we observe that $\alpha_p > 0$ for $V_{\infty, p} > K_{\text{post}}e^{-3}$. According to the Hartman-Grobman Theorem, the linearized dynamics $\frac{dx}{dt} = -\alpha_p x$ characterize the local stability. This gives $(i)$ and $(ii)$. 

For part $(iii)$, let
\[
x(t)=V(t)-V_{\infty,p}
\]
denote the deviation from the equilibrium. Since $V_{\infty,p}$ is an equilibrium of the postoperative growth equation
\[
\frac{dV}{dt}=f_p(V),
\]
we have
\[
f_p(V_{\infty,p})=0.
\]
Expanding $f_p(V)$ about $V_{\infty,p}$, we obtain
\[
f_p(V)=f_p(V_{\infty,p})+f_p'(V_{\infty,p})(V-V_{\infty,p})+\mathcal O\!\left((V-V_{\infty,p})^2\right).
\]
Therefore
\[
\dot{x}(t)=f_p'(V_{\infty,p})\,x(t)+\mathcal O(x(t)^2).
\]
Neglecting the higher-order term gives the linearized equation
\[
\dot{x}(t)=f_p'(V_{\infty,p})\,x(t).
\]
From part (ii), we have
\[
\alpha_p=-f_p'(V_{\infty,p})>0,
\]
so the linearized system may be written as
\[
\dot{x}(t)=-\alpha_p x(t).
\]
This is a first-order linear equation, and its solution with initial
condition $x(t_{op})=V_{\mathrm{res}}-V_{\infty,p}$ is
\[
x(t)=x(t_{op})e^{-\alpha_p(t-t_{op})}.
\]
Hence
\[
|x(t)|=|x(t_{op})|e^{-\alpha_p(t-t_{op})}.
\]

This proves the exponential decay for the linearized dynamics. Since $f_p$ is continuously differentiable near $V_{\infty,p}$ and $f_p'(V_{\infty,p})<0$, standard linearization theory implies that the equilibrium $V_{\infty,p}$ is locally exponentially stable for the nonlinear system as well. Thus the nonlinear system inherits local exponential convergence toward $V_{\infty,p}$, from its linearization.
\end{proof}

We model surgical intervention through an affine reduction operator, assuming that surgery removes a fixed proportion of the tumor while a small residual burden may remain after resection.

\begin{theorem}\label{Res_op}
Let $\mathcal{R}: \mathbb{R}^+ \to \mathbb{R}^+$ be the surgical operator defined by $$\mathcal{R}(V) = (1-\eta)V + \epsilon,$$ where $\eta \in (0,1)$ is the resection efficiency and $\epsilon > 0$ is the minimum residual volume. Then
\begin{enumerate}
    \item[(i)] 
    $\mathcal{R}$ has a unique fixed point at $V^* = \epsilon/\eta$, representing the volume invariant under the operator's action.
    \vspace{0.15cm}
    
    \item[(ii)] 
    $\mathcal{R}$ is a global contraction on $(0, \infty)$ with Lipschitz constant $L = 1-\eta$.
        \vspace{0.15cm}

    \item[(iii)] 
    For any $V > 0$, $\mathcal{R}(V) \geq \epsilon$, ensuring the presence of residual disease post-intervention.
\end{enumerate}
\end{theorem}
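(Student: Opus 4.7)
The plan is to dispatch all three claims by direct algebraic manipulation, since $\mathcal{R}$ is an affine map with positive slope $1-\eta \in (0,1)$ and positive intercept $\epsilon$. This structure makes the statements essentially equivalent to standard facts about affine contractions on $\mathbb{R}^+$, so the argument is short and the real task is organizing the three parts cleanly.

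For part (i), I would set $\mathcal{R}(V^*) = V^*$, which reduces to the linear equation $(1-\eta)V^* + \epsilon = V^*$, i.e.\ $\eta V^* = \epsilon$. Since $\eta \in (0,1)$ is nonzero, this has the unique solution $V^* = \epsilon/\eta$, and positivity of $V^*$ follows from $\epsilon, \eta > 0$, so $V^* \in (0,\infty)$ as required. For part (ii), I would compute directly
\begin{equation*}
|\mathcal{R}(V_1) - \mathcal{R}(V_2)| = |(1-\eta)(V_1 - V_2)| = (1-\eta)\,|V_1 - V_2|,
\end{equation*}
so $\mathcal{R}$ is Lipschitz with constant exactly $L = 1-\eta$, and since $L < 1$ this gives a global contraction on $(0,\infty)$. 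As a side remark, one could invoke the Banach fixed-point theorem to rederive (i) from (ii), but the direct calculation in (i) is shorter and avoids any appeal to completeness. For part (iii), I would simply note that for $V > 0$ we have $(1-\eta)V > 0$, hence $\mathcal{R}(V) = (1-\eta)V + \epsilon > \epsilon$, which implies the stated inequality $\mathcal{R}(V) \geq \epsilon$ and captures the biological meaning that microscopic residual disease of at least $\epsilon$ always persists.

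There is no genuine obstacle here; the only thing to be careful about is to state each conclusion in the exact form required (uniqueness in (i), the explicit Lipschitz constant rather than just contractivity in (ii), and the correct inequality direction in (iii)). I would conclude the proof in a few lines, keeping notation aligned with the preceding sections so that the fixed-point $V^* = \epsilon/\eta$ can later be contrasted with the dynamical equilibrium $V_{\infty,p}$ of Theorem~\ref{thm:postop_dynamics}, which is conceptually distinct even though both describe long-run residual states.
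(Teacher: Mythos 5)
Your proposal is correct and follows essentially the same route as the paper: solving the linear fixed-point equation for (i), computing $|\mathcal{R}(V_1)-\mathcal{R}(V_2)|=(1-\eta)|V_1-V_2|$ for (ii), and bounding $(1-\eta)V$ from below for (iii). The only cosmetic differences are your optional remark about the Banach fixed-point theorem and your strict inequality $\mathcal{R}(V)>\epsilon$, which of course implies the stated $\mathcal{R}(V)\geq\epsilon$.
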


\begin{proof}
$(i)$ Suppose $V^*$ is a fixed point of $\mathcal{R}$, i.e. $\mathcal{R}(V^*) = V^*$. Then
\begin{align*}
     (1-\eta)V^* + \epsilon &= V^*\\
   \Rightarrow \eta V^* &= \epsilon\\
   \Rightarrow V^* &= \epsilon/\eta.
\end{align*}
Since $\eta \neq 0$, the point $V^* = \epsilon/\eta$ is unique.
\vspace{0.15cm}

$(ii)$ For any $V_1, V_2 \in \mathbb{R}^+$, the Euclidean distance between images $\mathcal{R}(V_1)$ and $\mathcal{R}(V_2)$ is given by
\begin{align*}
    d(\mathcal{R}(V_1), \mathcal{R}(V_2)) &= |(1-\eta)V_1 + \epsilon - (1-\eta)V_2 - \epsilon|\\
    &= (1-\eta)|V_1 - V_2|.    
\end{align*}
Given $\eta \in (0,1)$, it follows that $0 < 1-\eta < 1$, satisfying the definition of a contraction mapping with the Lipschitz constant $L = 1 -\eta$.
\vspace{0.15cm}

$(iii)$ Since $V > 0$ and $(1-\eta) \geq 0$, their product $(1-\eta)V$ is non-negative. Therefore, $$\mathcal{R}(V) = (1-\eta)V + \epsilon \geq \epsilon.$$ This precludes total eradication ($V=0$) in a single application of the operator.
\end{proof}

\begin{table}[h]
\centering
\caption{Standard UPS Resection Parameters}
\label{tab:resection_params}
\begin{tabular}{llr}
\toprule
Parameter & Description & Value \\
\midrule
$V_0$ & Initial volume & 120 cm$^3$ \\
$\eta$ & Resection efficiency & 0.85 \\
$\epsilon$ & Residual volume & 0.8 cm$^3$ \\
$r_g$ & Pre-op growth rate & 0.15 day$^{-1}$ \\
$r_p$ & Post-op growth rate & 0.10 day$^{-1}$ \\
$K$ & Carrying capacity & 500 cm$^3$ \\
$K_{\text{post}}$ & Post-op capacity & 300 cm$^3$ \\
\bottomrule
\end{tabular}
\end{table}

The parameters listed in Table~\ref{tab:resection_params} describe the surgical resection stage of the model and are used for numerical illustration. The values were chosen to represent biologically reasonable postoperative scenarios for UPS, guided by standard assumptions on tumor burden, surgical resection, and regrowth dynamics. The parameter $\eta$ represents the fraction of tumor mass removed during surgery, while $\epsilon$ denotes the residual microscopic tumor burden that may remain after resection. The operator
\[
\mathcal{R}(V)=(1-\eta)V+\epsilon
\]
therefore produces an immediate reduction in tumor volume followed by a small residual population that can subsequently regrow. In relation to Theorem \ref{Res_op}, these parameters illustrate how the surgical operator acts as a contraction while preserving a nonzero residual burden, consistent with the possibility of postoperative recurrence.

We denote by $t_{op}^-$ and $t_{op}^+$ the times immediately before and after surgery, respectively. Thus
\[
V(t_{op}^-)
\]
represents the preoperative tumor volume, while
\[
V(t_{op}^+)=V_{res}
\]
denotes the residual tumor volume immediately following resection. To model the effect of adjuvant therapy, we introduce an additional loss term $-\delta V$, where $\delta$ represents the effective treatment-induced kill rate.

\begin{prop}
\label{prop:critical}
Let $\mathcal{R}(V)$ be the resection operator with fixed point $V^* = \epsilon/\eta$. The immediate impact of the intervention depends on the ratio $$\zeta := \frac{\eta V(t_{op}^-)}{\epsilon}$$ between the intended reduction and the residual constant. Then
\begin{enumerate}
    \item[(i)] 
    If $\zeta > 1$ then the preoperative volume exceeds the operator's fixed point, i.e. $V(t_{op}^-) > V^*$, and the intervention successfully reduces the tumor volume, i.e. $V_{res} < V(t_{op}^-)$, though it remains bounded below by $V^*$.
    \vspace{0.15cm}
    
    \item[(ii)] 
    If $\zeta = 1$, then the preoperative volume equals the fixed point, i.e. $V(t_{op}^-) = V^*$, and resulting in no net change in volume post-intervention, i.e. $V_{res} = V^*$.
    \vspace{0.15cm}
    
    \item[(iii)] 
    If $\zeta < 1$ then the preoperative volume is below the fixed point, i.e. $V(t_{op}^-) < V^*$, and the intervention results in an increase in measurable residual volume, i.e. $V_{res} > V(t_{op}^-)$ due to the dominance of the residual constant $\epsilon$.
\end{enumerate}
\end{prop}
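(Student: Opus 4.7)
\medskip

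\noindent\textbf{Proof proposal.} The plan is to reduce every claim to the sign of a single quantity, namely $V_{res} - V(t_{op}^-)$, which will have a clean expression in terms of $\zeta$. First I would unpack the definition $V_{res} = \mathcal{R}(V(t_{op}^-)) = (1-\eta)V(t_{op}^-) + \epsilon$ and compute
\begin{equation*}
V_{res} - V(t_{op}^-) = \epsilon - \eta V(t_{op}^-) = \epsilon\bigl(1-\zeta\bigr),
\end{equation*}
so that the sign of $V_{res} - V(t_{op}^-)$ is governed entirely by whether $\zeta$ is greater than, equal to, or less than $1$.

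Next I would translate the condition on $\zeta$ into a condition on the fixed point $V^*=\epsilon/\eta$. Since $\zeta = V(t_{op}^-)/V^*$, the three cases $\zeta>1$, $\zeta=1$, $\zeta<1$ correspond exactly to $V(t_{op}^-) > V^*$, $V(t_{op}^-) = V^*$, and $V(t_{op}^-) < V^*$, which gives the first half of each bullet immediately. Combining this with the signed identity above gives the volume-change statements: case (i) yields $V_{res}<V(t_{op}^-)$, case (ii) yields $V_{res}=V(t_{op}^-)=V^*$ (confirming also that $V^*$ is indeed a fixed point of $\mathcal{R}$), and case (iii) yields $V_{res}>V(t_{op}^-)$.

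To close case (i) I still need the lower bound $V_{res}>V^*$. For this I would use the contraction identity
\begin{equation*}
V_{res} - V^* = \mathcal{R}(V(t_{op}^-)) - \mathcal{R}(V^*) = (1-\eta)\bigl(V(t_{op}^-)-V^*\bigr),
\end{equation*}
which follows from the affine form of $\mathcal{R}$ (and is consistent with its being a Lipschitz contraction with constant $1-\eta$, as established earlier). Since $1-\eta \in (0,1)$ and $V(t_{op}^-)>V^*$ in case (i), this gives $V_{res}>V^*$. The same identity also confirms the biological monotone-approach intuition for case (iii), namely $V^*>V_{res}>V(t_{op}^-)$.

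There is no real obstacle here: every step is a one-line algebraic rearrangement, and the only mild subtlety is being careful with the direction of the inequalities when dividing through by $\epsilon>0$ and by $\eta>0$. The chief value of the proof is not technical but interpretive, and I would end with a short remark pointing out that the fixed point $V^*=\epsilon/\eta$ acts as a separatrix for the instantaneous effect of surgery: above it, resection reduces volume; below it, the additive contamination $\epsilon$ dominates and the ``surgery'' actually raises the measured residual.
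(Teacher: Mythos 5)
Your proposal is correct and follows essentially the same route as the paper: both reduce everything to the sign of $V_{res}-V(t_{op}^-)$ (your $\epsilon(1-\zeta)$ is identical to the paper's $\eta\bigl(V^*-V(t_{op}^-)\bigr)$) and both invoke the affine contraction identity $V_{res}-V^*=(1-\eta)\bigl(V(t_{op}^-)-V^*\bigr)$ for the bound relative to the fixed point. No gaps; the argument is complete.
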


\begin{proof}
The residual postoperative volume is given by
\[
V_{res} = \mathcal R(V(t_{op}^-)) = (1-\eta)V(t_{op}^-)+\epsilon.
\]
Now by putting $\epsilon = \eta V^*$, we get
\[ V_{res} = (1-\eta)V(t_{op}^-) + \eta V^*. \]
Then by subtracting $V(t_{op}^-)$ from both sides, we get 
\[ \Delta V \coloneqq V_{res} - V(t_{op}^-) = -\eta V(t_{op}^-) + \eta V^* = \eta(V^* - V(t_{op}^-)). \]
The sign of the change depends entirely on the relation between $V(t_{op}^-)$ and $V^*$, i.e.
\begin{enumerate}
    \item[(i)] If $\zeta > 1$, then $V(t_{op}^-) > V^*$, implying $\Delta V < 0$. So the size of the tumor reduces.
    \vspace{0.15cm}
    
    \item[(ii)] If $\zeta = 1$, then $V(t_{op}^-) = V^*$, implying $\Delta V = 0$. So no change in the tumor size.
    \vspace{0.15cm}
    
    \item[(iii)] If $\zeta < 1$, then $V(t_{op}^-) < V^*$, implying $\Delta V > 0$. So the size of the tumor increases.
\end{enumerate}
Furthermore, the distance to the fixed point is $$V_{res} - V^* = (1-\eta)(V(t_{op}^-) - V^*).$$ Since $|1-\eta| < 1$, the operator is a contraction, mapping all initial volumes toward $V^*$ over repeated applications.
\end{proof}

\subsection{Therapeutic Implications}
\vspace{0.2cm}

We denote by $K_{\mathrm{post}}$ the effective postoperative carrying capacity, while $V_{\mathrm{crit}}$ represents the recurrence threshold used in the model.
\begin{prop}
\label{prop:adjuvant}
To prevent clinical recurrence and maintain the tumor volume below a critical threshold $V(t) \leq V_{\text{crit}}$ as $t \to \infty$, the adjuvant therapy's effective kill rate $\delta$ must satisfy
\begin{equation}
\delta \geq r_p \ln\left(\frac{K_{\text{post}}}{V_{\text{crit}}}\right) - \lambda V_{\text{crit}}^{-1/3}
\end{equation}
Using UPS parameters ($r_p = 0.10, K_{\text{post}}=300, \lambda=0.05$), this requires $\delta \geq 0.23 \text{ day}^{-1}$ for $V_{\text{crit}} = 24.6$.
\end{prop}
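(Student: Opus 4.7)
The plan is to append an adjuvant kill term $-\delta V$ to the postoperative dynamics of Theorem \ref{thm:postop_dynamics}, producing the controlled equation
\begin{equation*}
\frac{dV}{dt} = r_p V \ln\left(\frac{K_{\text{post}}}{V}\right) - \lambda V^{2/3} - \delta V = V\,g(V),
\end{equation*}
with $g(V) = r_p \ln(K_{\text{post}}/V) - \lambda V^{-1/3} - \delta$. The first key observation is that $V_{\text{crit}}$ acts as an invariant upper barrier for the flow exactly when $g(V_{\text{crit}}) \le 0$. My opening step is simply to write this sign condition out and rearrange algebraically; this is a one-line manipulation and immediately produces the claimed lower bound on $\delta$.

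Second, to upgrade the local sign condition into the asymptotic statement $\limsup_{t \to \infty} V(t) \le V_{\text{crit}}$, I would reuse the linearization strategy of Theorem \ref{thm:postop_dynamics}: the controlled system still admits a unique asymptotically stable equilibrium $V_{\infty,\delta}$, and the inequality $g(V_{\text{crit}}) \le 0$ forces this equilibrium to lie at or below $V_{\text{crit}}$. A comparison argument then shows that any trajectory in the basin of attraction decays exponentially to $V_{\infty,\delta} \le V_{\text{crit}}$, yielding the asymptotic bound. For the final bullet, I would substitute the UPS parameters $r_p = 0.10$, $K_{\text{post}} = 300$, $\lambda = 0.05$ and $V_{\text{crit}} = 24.6$ into the derived inequality to confirm $\delta \ge 0.23$ day$^{-1}$.

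The main obstacle is the root structure of $g$. A direct computation gives $g'(V) = -r_p/V + \lambda/(3V^{4/3})$, which changes sign at $V_{*} = (\lambda/(3r_p))^3$, so $g$ is unimodal and $g(V) = 0$ can have two roots. Only the larger root is asymptotically stable, mirroring the $V_\infty > K e^{-3}$ proviso from Theorem \ref{thm_preop}. I would therefore need to check, or state as an implicit hypothesis, that $V_{\text{crit}}$ lies to the right of $V_{*}$, so that the sign condition $g(V_{\text{crit}}) \le 0$ genuinely pushes the \emph{upper} equilibrium below the threshold rather than sandwiching $V_{\text{crit}}$ between a stable and an unstable fixed point. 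For the given UPS numbers this placement is easy to verify directly, so the overall argument goes through cleanly once the unimodality caveat is acknowledged.
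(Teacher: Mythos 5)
Your core step is exactly the paper's proof: append the $-\delta V$ term, impose $\left.\frac{dV}{dt}\right|_{V_{\text{crit}}} \leq 0$, divide by $V_{\text{crit}} > 0$, and rearrange to get the bound, then plug in the UPS numbers (which do give $\approx 0.233$). Your additional analysis of the root structure of $g$ — that $g$ is unimodal with the threshold condition only controlling the upper (stable) equilibrium, so one must check $V_{\text{crit}} > (\lambda/(3r_p))^3$ — is correct and goes beyond the paper, which stops at the pointwise sign condition and never actually justifies the asymptotic claim $V(t) \leq V_{\text{crit}}$ as $t \to \infty$; for the stated parameters $(\lambda/(3r_p))^3 \approx 0.0046\ \text{cm}^3 \ll 24.6$, so your caveat is harmless here.
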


\begin{proof}
The postoperative dynamics under continuous adjuvant intervention are described by the augmented system
\[ \frac{dV}{dt} = f_p(V) - \delta V = r_p V \ln\left(\frac{K_{\text{post}}}{V}\right) - \lambda V^{2/3} - \delta V \]
For the tumor to remain stable or regress at the recurrence threshold $V_{\text{crit}}$, we require the growth rate to be non-positive, i.e. 
\begin{align*}
\frac{dV}{dt} \big|_{V_{\text{crit}}} &\leq 0 \\
\Rightarrow r_p V_{\text{crit}} \ln\left(\frac{K_{\text{post}}}{V_{\text{crit}}}\right) - \lambda V_{\text{crit}}^{2/3} - \delta V_{\text{crit}} &\leq 0\\
\Rightarrow r_p \ln\left(\frac{K_{\text{post}}}{V_{\text{crit}}}\right) - \lambda V_{\text{crit}}^{-1/3} - \delta &\leq 0 \hspace{0.5cm} (\text{as } V_{\text{crit}} > 0)\\
\Rightarrow \delta \geq r_p \ln\left(\frac{K_{\text{post}}}{V_{\text{crit}}}\right) - \lambda V_{\text{crit}}^{-1/3}
\end{align*}

\end{proof}

We denote by $t_{op}$ the time of surgery and by $t_{\mathrm{int}}$ the time at which adjuvant therapy is initiated. The quantity $V_{\mathrm{limit}}$ denotes a secondary safety threshold for tumor volume.

\begin{corollary}
\label{cor:window}
The maximum allowable delay between the surgical event at $t_{op}$ and the initiation of adjuvant therapy at $t_{\text{int}}$ to ensure the tumor volume does not exceed a secondary safety limit $V_{\text{limit}}$ is given by
\begin{equation}
\Delta t = t_{\text{int}} - t_{op} < \frac{1}{r_p}\ln\left(\frac{\ln(K_{\text{post}}/V_{\text{res}})}{\ln(K_{\text{post}}/V_{\text{limit}})}\right)
\end{equation}
where $V_{\text{res}} = (1-\eta)V(t_{op}^-) + \epsilon$. In particular, for $V_{\text{limit}} = 150 \text{ cm}^3$ and $V_{\text{res}} = 25 \text{ cm}^3$, the window is $\Delta t \approx 10.3 \text{ days}$.
\end{corollary}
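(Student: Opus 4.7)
The plan is to treat the interval between the surgical event and the initiation of adjuvant therapy as a free-growth window governed by the postoperative ODE of Theorem~\ref{thm:postop_dynamics} with initial condition $V(t_{op}^+)=V_{\text{res}}=(1-\eta)V(t_{op}^-)+\epsilon$, and to extract a closed-form delay bound by comparing the true dynamics against the pure Gompertz flow, for which the initial value problem is explicitly solvable.

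First, I would exploit the sign of the necrosis term: since $-\lambda V^{2/3}\le 0$ for all $V>0$, one has $f_p(V)\le r_p V\ln(K_{\text{post}}/V)$ pointwise. By the standard scalar ODE comparison theorem applied to trajectories with identical initial data, the postoperative volume $V(t)$ is dominated by the auxiliary solution $\tilde V(t)$ of the pure Gompertz equation $\dot{\tilde V}=r_p\tilde V\ln(K_{\text{post}}/\tilde V)$, $\tilde V(t_{op})=V_{\text{res}}$. Consequently, any time window over which $\tilde V(t)<V_{\text{limit}}$ automatically ensures $V(t)<V_{\text{limit}}$, so the bound derived from $\tilde V$ is conservative (\emph{safe}) and thus appropriate as a clinical threshold.

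Next, I would solve the auxiliary Gompertz ODE by the change of variables $u(t):=\ln(K_{\text{post}}/\tilde V(t))$, which linearises the equation to $\dot u=-r_p u$ with $u(t_{op})=\ln(K_{\text{post}}/V_{\text{res}})$. Integrating gives $u(t)=\ln(K_{\text{post}}/V_{\text{res}})\,e^{-r_p(t-t_{op})}$, i.e.\ an explicit formula for $\tilde V(t)$. Imposing the safety condition $\tilde V(t_{op}+\Delta t)=V_{\text{limit}}$, equivalently $u(t_{op}+\Delta t)=\ln(K_{\text{post}}/V_{\text{limit}})$, and solving for $\Delta t$ yields the stated expression after a routine manipulation of logarithms. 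Well-posedness of the formula requires $V_{\text{res}}<V_{\text{limit}}<K_{\text{post}}$, so that both logarithms are positive and their ratio strictly exceeds one; under the given UPS parameters this holds, and numerical substitution recovers $\Delta t\approx 10.3$ days.

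The main obstacle is not algebraic but conceptual: justifying that the explicit Gompertz trajectory genuinely bounds the true post-surgical volume from above, despite the nonlinearity of $f_p$. This is where I would be most careful, invoking either a Grönwall-type differential inequality for $V(t)-\tilde V(t)$ or, equivalently, the scalar comparison principle together with uniqueness of flows, to ensure that dropping the (strictly negative) necrosis contribution cannot cause the inequality to reverse along the orbit. Once that comparison is secured, the remainder of the argument is purely the closed-form integration above.
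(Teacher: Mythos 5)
Your proposal is correct and follows essentially the same route as the paper: both drop the necrosis term $-\lambda V^{2/3}$ to obtain a conservative bound, reduce to the pure Gompertz flow via the substitution $u=\ln(K_{\text{post}}/V)$ (you at the ODE level, the paper inside the separated time integral), and solve for $\Delta t$. Your explicit appeal to the scalar comparison principle is a slightly more careful justification of the inequality direction than the paper's informal remark that neglecting the metabolic term is ``conservative,'' but the substance of the argument is the same.
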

\begin{proof}
The postoperative tumor dynamics during the untreated period $t \in [t_{op}, t_{\text{int}}]$ are governed by 
\begin{equation}
\frac{dV}{dt} = r_p V \ln \left( \frac{K_{\text{post}}}{V} \right) - \lambda V^{2/3}.
\end{equation}
Thus,
\[ dt = \frac{dV}{r_p V \ln \left( \frac{K_{\text{post}}}{V} \right) - \lambda V^{2/3}} \]
Integrating both sides from the surgical state $(t_{op}, V_{\text{res}})$ to the intervention state $(t_{\text{int}}, V_{\text{limit}})$, we get
\begin{equation}
\int_{t_{op}}^{t_{\text{int}}} d\tau = \int_{V_{\text{res}}}^{V_{\text{limit}}} \frac{dV}{r_p V \ln \left( \frac{K_{\text{post}}}{V} \right) - \lambda V^{2/3}}.
\end{equation}

As $\lambda V^{2/3}$ acts as an inhibitory term, we observe that for any $V > 0$:
\[ r_p V \ln \left( \frac{K_{\text{post}}}{V} \right) - \lambda V^{2/3} < r_p V \ln \left( \frac{K_{\text{post}}}{V} \right). \]
Thus, by neglecting the metabolic term $-\lambda V^{2/3}$, we obtain a conservative upper bound for the growth rate. This simplification allows for the analytical solution of the integral
\[ \Delta t \approx \int_{V_{\text{res}}}^{V_{\text{limit}}} \frac{dV}{r_p V \ln \left( \frac{K_{\text{post}}}{V}\right) }. \]
Let $u = \ln(K_{\text{post}}/V)$, then $du = -\frac{1}{V}dV$. Therefore, by substituting these into the integral, we get
\[ \Delta t \approx -\frac{1}{r_p} \int_{\ln(K_{\text{post}}/V_{\text{res}})}^{\ln(K_{\text{post}}/V_{\text{limit}})} \frac{du}{u}. \]
Thus,
\[ \Delta t \approx -\frac{1}{r_p} \left[ \ln|u| \right]_{\ln(K_{\text{post}}/V_{\text{res}})}^{\ln(K_{\text{post}}/V_{\text{limit}})} = -\frac{1}{r_p} \left[ \ln \left( \ln \frac{K_{\text{post}}}{V_{\text{limit}}} \right) - \ln \left( \ln \frac{K_{\text{post}}}{V_{\text{res}}} \right) \right] \]
Hence,
\begin{equation}
t_{\text{int}} - t_{op} = \frac{1}{r_p} \ln \left[ \frac{\ln(K_{\text{post}}/V_{\text{res}})}{\ln(K_{\text{post}}/V_{\text{limit}})} \right]
\end{equation}

Since the term $-\lambda V^{2/3}$ reduces the growth rate, the expression above should be interpreted as a conservative analytical estimate for the allowable delay before intervention.
\end{proof}

\section{Postoperative Recovery Dynamics in UPS}

The postoperative model is organized into several connected stages. We first study the inflammatory phase immediately following surgery, where tumor dynamics are influenced by inflammation, immune clearance, and hypoxia. This is followed by a proliferative recovery phase describing subsequent tumor regrowth. The transition between these two regimes is modeled through a biomarker-driven switching mechanism. Once the postoperative growth dynamics are established, we study treatment effects through radiation scheduling and tumor-immune interaction. Thus, the later sections build upon the earlier growth models and describe different components of the overall postoperative dynamics.

\subsection{Inflammatory Phase Model}

To model the early postoperative inflammatory phase, we assume that tumor dynamics are governed by the combined effects of inflammation-driven proliferation, immune-mediated clearance, and hypoxia-induced cell loss. The net linear term
\[
(r_{\mathrm{inflam}}-\delta_{\mathrm{immune}})V
\]
represents the balance between inflammatory stimulation and immune control, while the term
\[
-\kappa_{\mathrm{hypoxia}}V^{2/3}
\]
accounts for metabolic and vascular stress associated with limited oxygen supply following surgery.

Thus, the postoperative inflammatory phase dynamics is given by
\begin{equation}\label{inflam}
\frac{dV}{dt} = (r_{\text{inflam}} - \delta_{\text{immune}}) V - \kappa_{\text{hypoxia}} V^{2/3},
\end{equation}
where $r_{\text{inflam}}$ denotes the proliferation rate associated with postoperative inflammatory signaling, $\delta_{\text{immune}}$ represents immune-mediated tumor clearance, and $\kappa_{\text{hypoxia}}$ measures hypoxia-induced cell loss due to vascular disruption. For convenience, we write
\[
r=r_{\text{inflam}}-\delta_{\text{immune}}
\]
for the effective net growth rate.

\begin{theorem}
\label{thm:inflam}
For $r \neq 0$, the above system (\ref{inflam}) admits the analytical solution
\begin{equation}
V(t) = \left[ \left( V_{\text{res}}^{1/3} - \frac{\kappa_{\text{hypoxia}}}{r} \right) e^{rt/3} + \frac{\kappa_{\text{hypoxia}}}{r} \right]^3
\end{equation}
where $V_{\text{res}}$ is the volume immediately following surgery. The qualitative behavior is determined by $r$. In particular,
\begin{enumerate}
    \item[(i)] If immune clearance dominates, i.e. $r<0$, then $V(t)$ is strictly monotonically decreasing and converges to $V=0$ in finite time
    \begin{equation}
    T_{\text{ext}} = \frac{3}{|r|} \ln \left( 1 + \frac{|r|V_{\text{res}}^{1/3}}{\kappa_{\text{hypoxia}}} \right)
    \end{equation}
    \vspace{0.15cm}
    
    \item[(ii)] If $r > 0$, then there exists an unstable equilibrium (threshold) at $$V_{\text{threshold}} = \left( \frac{\kappa_{\text{hypoxia}}}{r} \right)^3,$$ and if $V_{\text{res}} < V_{\text{threshold}}$, the hypoxic term dominates, leading to regression despite a positive net growth rate.
\end{enumerate}
\end{theorem}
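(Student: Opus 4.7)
My plan is to reduce the nonlinear ODE (\ref{inflam}) to a linear one by the standard Bernoulli substitution, and then read off the qualitative behavior from the explicit closed-form.

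First, I would set $u(t) = V(t)^{1/3}$, so that $V = u^3$ and $dV/dt = 3u^2 \, du/dt$. Substituting into (\ref{inflam}) and dividing by $3u^2$ (valid while $V>0$) yields the linear first-order equation
\begin{equation*}
\frac{du}{dt} = \frac{r}{3}\,u - \frac{\kappa_{\text{hypoxia}}}{3}.
\end{equation*}
For $r \neq 0$ this has constant particular solution $u_p = \kappa_{\text{hypoxia}}/r$, so the general solution is $u(t) = (u(0) - \kappa_{\text{hypoxia}}/r)e^{rt/3} + \kappa_{\text{hypoxia}}/r$. Using $u(0) = V_{\text{res}}^{1/3}$ and cubing gives exactly the formula stated in the theorem.

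For part (i), with $r<0$ I would note that $dV/dt = V^{2/3}\bigl(rV^{1/3} - \kappa_{\text{hypoxia}}\bigr)$, whose second factor is strictly negative for $V>0$, establishing strict monotone decrease. For the extinction time, I would solve $u(T_{\text{ext}}) = 0$; writing $r = -|r|$ so that $\kappa_{\text{hypoxia}}/r = -\kappa_{\text{hypoxia}}/|r|$, the equation becomes
\begin{equation*}
\bigl(V_{\text{res}}^{1/3} + \tfrac{\kappa_{\text{hypoxia}}}{|r|}\bigr) e^{-|r|T_{\text{ext}}/3} = \tfrac{\kappa_{\text{hypoxia}}}{|r|},
\end{equation*}
and taking logarithms reproduces the claimed $T_{\text{ext}} = (3/|r|)\ln\bigl(1 + |r|V_{\text{res}}^{1/3}/\kappa_{\text{hypoxia}}\bigr)$. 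The only mild care needed is to verify that the formula actually evaluates $V$ to $0$ rather than producing a sign change, since $u^3$ is cubed and could a priori continue past zero; I would resolve this by noting that the ODE is only biologically defined for $V \geq 0$ and that extinction at $T_{\text{ext}}$ is the relevant stopping event.

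For part (ii), with $r>0$ I would find the equilibria of $h(V) := rV - \kappa_{\text{hypoxia}} V^{2/3}$ by solving $V^{2/3}(rV^{1/3} - \kappa_{\text{hypoxia}}) = 0$, yielding $V = 0$ and $V_{\text{threshold}} = (\kappa_{\text{hypoxia}}/r)^3$. Linearizing via $h'(V) = r - \tfrac{2}{3}\kappa_{\text{hypoxia}} V^{-1/3}$ and evaluating at $V_{\text{threshold}}$ gives $h'(V_{\text{threshold}}) = r - \tfrac{2}{3}r = r/3 > 0$, confirming the unstable character. To show regression below threshold, I would observe that $V_{\text{res}} < V_{\text{threshold}}$ is equivalent to $V_{\text{res}}^{1/3} - \kappa_{\text{hypoxia}}/r < 0$; since $e^{rt/3}$ grows unboundedly for $r>0$, the bracket in the closed-form solution must cross zero in finite time, producing extinction exactly as in part (i) with the sign analysis adapted.

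The main obstacle, such as it is, is bookkeeping the signs in the $r<0$ case so that the extinction time emerges in the clean $|r|$ form stated; everything else reduces to the elementary linear ODE and a single monotonicity check on $h(V)$.
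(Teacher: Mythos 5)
Your proposal is correct and follows essentially the same route as the paper: the Bernoulli substitution $u=V^{1/3}$ reduces the equation to the same linear ODE, which you solve by particular-plus-homogeneous solutions where the paper uses an integrating factor, and your extinction-time computation for $r<0$ matches exactly. You in fact go slightly further than the paper's written proof by supplying the monotonicity factorization for part (i) and the linearization $h'(V_{\text{threshold}})=r/3>0$ establishing instability in part (ii), which the paper asserts but does not verify.
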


\begin{proof}
Since
\[
r=r_{\mathrm{inflam}}-\delta_{\mathrm{immune}},
\]
the equation~\eqref{inflam} reduces to
\[
\frac{dV}{dt} = rV - \kappa_{\mathrm{hypoxia}}V^{2/3},
\]
which is then solved using the transformation
\[
W=V^{1/3}.
\]
This gives
\[
\frac{dW}{dt} = \frac13 V^{-2/3}\frac{dV}{dt}.
\]
Therefore
\[ \frac{dW}{dt} = \frac{1}{3}V^{-2/3} \left[ rV - \kappa_{\text{hypoxia}}V^{2/3} \right] = \frac{r}{3}W - \frac{\kappa_{\text{hypoxia}}}{3}. \]
This gives a first-order linear non-homogeneous equation, and to solve this we use the integrating factor $\mu(t) = e^{-\int (r/3) dt} = e^{-rt/3}$. This gives
\[ \frac{d}{dt} \left( W e^{-rt/3} \right) = -\frac{\kappa_{\text{hypoxia}}}{3} e^{-rt/3} \]
Integrating both sides from $0$ to $t$ gives 
\begin{align*} 
W(t) e^{-rt/3} - W(0) &= \int_{0}^{t} -\frac{\kappa_{\text{hypoxia}}}{3} e^{-r\tau/3} d\tau \\
\Rightarrow W(t) e^{-rt/3} - V_{\text{res}}^{1/3} &= \left[ \frac{\kappa_{\text{hypoxia}}}{r} e^{-r\tau/3} \right]_{0}^{t}\\
\Rightarrow W(t) e^{-rt/3} &= V_{\text{res}}^{1/3} + \frac{\kappa_{\text{hypoxia}}}{r} \left( e^{-rt/3} - 1 \right). 
\end{align*}

Multiplying through by $e^{rt/3}$ and rearranging terms, we get
\[ W(t) = \left( V_{\text{res}}^{1/3} - \frac{\kappa_{\text{hypoxia}}}{r} \right) e^{rt/3} + \frac{\kappa_{\text{hypoxia}}}{r}. \]
Thus,
\[ V(t) = \left[ \left( V_{\text{res}}^{1/3} - \frac{\kappa_{\text{hypoxia}}}{r} \right) e^{rt/3} + \frac{\kappa_{\text{hypoxia}}}{r} \right]^3 \]
For $r < 0$, let $r = -|r|$. Extinction occurs when $W(T_{\text{ext}}) = 0$. So,
\[ 0 = \left( V_{\text{res}}^{1/3} - \frac{\kappa_{\text{hypoxia}}}{-|r|} \right) e^{-|r|T_{\text{ext}}/3} + \frac{\kappa_{\text{hypoxia}}}{-|r|}. \]
Rearranging terms, we get
\[ \frac{\kappa_{\text{hypoxia}}}{|r|} = \left( V_{\text{res}}^{1/3} + \frac{\kappa_{\text{hypoxia}}}{|r|} \right) e^{-|r|T_{\text{ext}}/3}. \]
Thus
\[ e^{|r|T_{\text{ext}}/3} = \frac{V_{\text{res}}^{1/3} + \frac{\kappa_{\text{hypoxia}}}{|r|}}{\frac{\kappa_{\text{hypoxia}}}{|r|}} = \frac{|r|V_{\text{res}}^{1/3}}{\kappa_{\text{hypoxia}}} + 1. \]
Therefore,
\begin{align*} \frac{|r|T_{\text{ext}}}{3} &= \ln \left( 1 + \frac{|r|V_{\text{res}}^{1/3}}{\kappa_{\text{hypoxia}}} \right)\\ \Rightarrow T_{\text{ext}} &= \frac{3}{|r|} \ln \left( 1 + \frac{|r|V_{\text{res}}^{1/3}}{\kappa_{\text{hypoxia}}} \right).
\end{align*}
This completes the proof.
\end{proof}

\subsection{Proliferative Phase Analysis}

\begin{theorem}
Consider the tumor volume dynamics on the interval $\Omega=(0,K)$ defined by
\begin{equation}\label{eq:prolif_dynamics}
\frac{dV}{dt}=r_pV\left(1-\frac{V}{K}\right)-\gamma V^{3/4}+\eta V^{1/2}=G(V).
\end{equation}
The terms $-\gamma V^{3/4}$ and $\eta V^{1/2}$ are used here as corrections representing, respectively, inhibitory stress effects and residual recovery-driven growth during the postoperative proliferative phase.

Assume that $G(K)<0$. Then there exists at least one equilibrium $V_c\in(0,K)$. Moreover, if an equilibrium $V_c\in(0,K)$ satisfies $G'(V_c)<0$, then $V_c$ is locally exponentially stable. In that case, the linearized deviation $x(t)=V(t)-V_c$ satisfies
\[
|x(t)|\approx |x(0)|e^{-\alpha t},
\qquad \alpha=-G'(V_c),
\]
and
\[
\alpha=\frac{r_pV_c}{K}+\frac{1}{4}\gamma V_c^{-1/4}
+\frac{1}{2}r_p\left(1-\frac{V_c}{K}\right).
\]
\end{theorem}

\begin{proof}
Since
\[
G(V)=r_pV\left(1-\frac{V}{K}\right)-\gamma V^{3/4}+\eta V^{1/2},
\]
we have
\[
\lim_{V\to0^+}G(V)=0^+,
\]
because the positive term $\eta V^{1/2}$ dominates the negative term $\gamma V^{3/4}$ near $V=0$. On the other hand, by assumption,
\[
G(K)<0.
\]
Hence, by continuity, there exists at least one equilibrium
$V_c\in(0,K)$ such that $G(V_c)=0$.

To analyze local stability, we compute
\[
G'(V)=r_p\left(1-\frac{2V}{K}\right)-\frac{3}{4}\gamma V^{-1/4}
+\frac{1}{2}\eta V^{-1/2}.
\]
If $G'(V_c)<0$, then the linearized equation
\[
\dot{x}=G'(V_c)x
\]
has exponentially decaying solutions, and therefore $V_c$ is locally
exponentially stable.

Using the equilibrium identity
\[
r_p\left(1-\frac{V_c}{K}\right)-\gamma V_c^{-1/4}+\eta V_c^{-1/2}=0,
\]
we may rewrite $G'(V_c)$ as
\[
G'(V_c)
=-\frac{r_pV_c}{K}-\frac{1}{4}\gamma V_c^{-1/4}
-\frac{1}{2}r_p\left(1-\frac{V_c}{K}\right).
\]
Thus
\[
\alpha=-G'(V_c)
=\frac{r_pV_c}{K}+\frac{1}{4}\gamma V_c^{-1/4}
+\frac{1}{2}r_p\left(1-\frac{V_c}{K}\right),
\]
which proves the stated expression for the local decay rate.
\end{proof}

\subsection{Phase Transition Analysis}
To describe the gradual transition from the inflammatory phase to tumor regrowth, we introduce a switching mechanism. We use $[IL-6]$ as a marker of postoperative inflammation and combine its effect with a time-dependent recovery process to model the shift from inflammation-dominated dynamics to proliferative growth.

\begin{theorem}
The global system dynamics transition from the inflammatory growth rate $G_{\text{inf}}(V)$ to the proliferative growth rate $G_{\text{pro}}(V)$ via a composite switching function $\phi(t) \in [0,1]$, i.e.
\begin{equation}
\frac{dV}{dt} = [1 - \phi(t)] G_{\text{inf}}(V) + \phi(t) G_{\text{pro}}(V).
\end{equation}
Here the switching function is defined as
\begin{equation}
\phi(t) = \left( \frac{[IL-6](t)^n}{[IL-6](t)^n + \theta^n} \right) \left( \frac{1}{1 + e^{-k(t-t_0)}} \right),
\end{equation}
where the $[IL-6]$ level satisfies the first-order kinetics $\frac{d}{dt}[IL-6] = \beta - \mu[IL-6]$. The transition exhibits a temporal width $\Delta t$, defined as the interval where the temporal component shifts from 10\% to 90\% of its saturation value, given by
\begin{equation}
\Delta t = \frac{2\ln(9)}{k} \approx \frac{4.394}{k}
\end{equation}
\end{theorem}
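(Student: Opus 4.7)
The plan is to isolate the temporal logistic factor $\sigma(t) := \frac{1}{1+e^{-k(t-t_0)}}$ inside $\phi(t)$, since only this component carries the smooth sigmoidal ramp in $t$ whose 10\%-90\% width is the quantity being claimed. The Hill function of $[IL\text{-}6](t)$ contributes an amplitude factor in $[0,1]$ that saturates in cytokine concentration rather than in time; its maximum value is $1$ (as $[IL\text{-}6]\to\infty$), which is independent of the temporal ramp. Hence $\Delta t$ should be defined by when $\sigma(t)$ crosses $0.1$ and $0.9$ of its own asymptote $\lim_{t\to\infty}\sigma(t)=1$, and the proof reduces to inverting $\sigma$ at these two levels.

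First I would record that $\sigma'(t)=k\,\sigma(t)\bigl(1-\sigma(t)\bigr)>0$, so $\sigma$ is a strictly increasing diffeomorphism from $\mathbb{R}$ onto $(0,1)$; in particular the level sets $\{\sigma=0.1\}$ and $\{\sigma=0.9\}$ are singletons, which I name $\{t_{10}\}$ and $\{t_{90}\}$. Then I would invert: $\sigma(t_{10})=1/10$ gives $e^{-k(t_{10}-t_0)}=9$, hence $t_{10}=t_0-\frac{\ln 9}{k}$; and $\sigma(t_{90})=9/10$ gives $e^{-k(t_{90}-t_0)}=1/9$, hence $t_{90}=t_0+\frac{\ln 9}{k}$. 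Subtracting,
\[
\Delta t \;=\; t_{90}-t_{10} \;=\; \frac{\ln 9 - (-\ln 9)}{k} \;=\; \frac{2\ln 9}{k} \;\approx\; \frac{4.394}{k},
\]
which is the claimed width. As a structural byproduct, $t_{10}$ and $t_{90}$ are symmetric about $t_0$, reflecting the point symmetry of $\sigma$ about its inflection.

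The main obstacle here is interpretive rather than computational. One must commit to measuring the transition width with respect to the temporal factor's own asymptotic plateau, not with respect to the full $\phi(t)$, whose long-time saturation is governed by the first-order cytokine kinetics $\frac{d}{dt}[IL\text{-}6]=\beta-\mu[IL\text{-}6]$ (equilibrating at $[IL\text{-}6]_\infty=\beta/\mu$ and thus yielding a Hill factor strictly below $1$ unless $\beta/\mu \gg \theta$). Once this convention is fixed, no further analysis of the Hill coupling or of the IL-6 ODE is needed, and the remaining argument is simply the logistic inversion above.
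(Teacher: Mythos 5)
Your proposal is correct and follows essentially the same route as the paper: the paper likewise defines $t_{10}$ and $t_{90}$ by inverting the logistic factor at $0.1$ and $0.9$ of its own saturation value and subtracts to get $\Delta t = \frac{2\ln 9}{k}$. Your added remarks on the strict monotonicity of $\sigma$ and the interpretive choice of measuring the width against the temporal factor alone (rather than the full $\phi$, whose plateau depends on the IL-6 steady state $\beta/\mu$) are sound refinements of the same argument.
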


\begin{proof}
The switching function $\phi(t)$ is the product of a biochemical trigger (Hill function) and a temporal smoothing term (Logistic function). First, we solve the kinetics for the biomarker $c(t) = [IL-6](t)$ using
\[ \frac{dc}{dt} + \mu c = \beta .\]
Using the integrating factor $e^{\mu t}$, the general solution is given by
\[ c(t) = \frac{\beta}{\mu} + \left( c_0 - \frac{\beta}{\mu} \right) e^{-\mu t} \]
As $t \to \infty$, $c(t)$ approaches the steady-state $\beta/\mu$. The Hill component $\frac{c^n}{c^n + \theta^n}$ ensures that the transition is biologically gated by the concentration $c(t)$ crossing the threshold $\theta$.

Next, we derive the temporal width $\Delta t$ of the logistic term $S(t) = \frac{1}{1 + e^{-k(t-t_0)}}$. We define $t_{10}$ and $t_{90}$ as the time points where $S(t)$ reaches 0.1 and 0.9 respectively,
\begin{align*}
\frac{1}{1 + e^{-k(t_{10}-t_0)}} &= 0.1\\ 
\Rightarrow 1 + e^{-k(t_{10}-t_0)} &= 10 \\
\Rightarrow e^{-k(t_{10}-t_0)} &= 9 \\
\Rightarrow -k(t_{10}-t_0) &= \ln(9)\\ 
\Rightarrow t_{10} &= t_0 - \frac{\ln 9}{k}.
\end{align*}
Similarly, for the 90\% point,
\begin{align*}
\frac{1}{1 + e^{-k(t_{90}-t_0)}} &= 0.9 \\
\Rightarrow 1 + e^{-k(t_{90}-t_0)} &= \frac{10}{9} \\
\Rightarrow e^{-k(t_{90}-t_0)} &= \frac{1}{9} \\
\Rightarrow -k(t_{90}-t_0) = \ln(1/9) &= -\ln(9)\\
\Rightarrow t_{90} &= t_0 + \frac{\ln 9}{k}.
\end{align*}

The transition width is calculated as:
\[ \Delta t = t_{90} - t_{10} = \left( t_0 + \frac{\ln 9}{k} \right) - \left( t_0 - \frac{\ln 9}{k} \right) = \frac{2\ln 9}{k} \]
Since $\phi(t)$ is a product of differentiable functions, the resulting growth rate $\frac{dV}{dt}$ remains continuous and differentiable, which ensures numerical stability in the hybrid model.
\end{proof}

\subsection{Optimal Radiation Timing}

Radiation therapy is typically administered under clinical constraints on both the instantaneous dose rate and the total amount of radiation that can be delivered during a treatment window. In the present model, the effect of radiation on the tumor volume is described by the decay equation
\[
\frac{dV}{dt} = -\Psi(t)D(t)V,
\]
where $\Psi(t)$ represents the time-dependent radiosensitivity of the tumor and $D(t)$ denotes the radiation dose rate. The radiosensitivity is assumed to vary periodically,
\[
\Psi(t) = \alpha + \beta \sin\!\left(\frac{2\pi t}{\tau}\right),
\]
reflecting temporal variations in tumor response to radiation during treatment. We assume that the admissible dose schedules satisfy the bounds
\[
0 \le D(t) \le D_{\max},
\]
together with a total dose constraint
\[
\int_0^T D(t)\,dt \le D_{\mathrm{total}}.
\]
The goal is to determine a radiation schedule that minimizes the terminal tumor volume $V(T)$.

 \begin{theorem}\label{thm:rt}
Consider the radiation-induced volume decay
\[
\frac{dV}{dt} = -\Psi(t)D(t)V,
\]
where
\[
\Psi(t) = \alpha + \beta \sin\!\left(\frac{2\pi t}{\tau}\right)
\]
denotes the time-dependent radiosensitivity and assume $\Psi(t)\ge0$ for $t\in[0,T]$. Let the admissible dose schedules satisfy
\[
0 \le D(t) \le D_{\max}, 
\qquad 
\int_0^T D(t)\,dt \le D_{\mathrm{total}}.
\]
Then any dose schedule $D^*(t)$ minimizing the terminal tumor volume $V(T)$ is of bang--bang type. More precisely, there exists a constant threshold $\lambda$ such that
\[
D^*(t) =
\begin{cases}
D_{\max}, & \text{if } \Psi(t) > \lambda,\\[2mm]
0, & \text{if } \Psi(t) < \lambda,
\end{cases}
\]
for almost every $t\in[0,T]$. On the set where $\Psi(t)=\lambda$, the control may take intermediate values so that the total dose
constraint is satisfied.
\end{theorem}

\begin{proof}
For any admissible control $D(t)$, the state equation can be solved explicitly. Integrating the equation gives
\[
V(T) = V(0)\exp\!\left(-\int_0^T \Psi(t)D(t)\,dt\right).
\]
Since $V(0)$ is fixed and the exponential function is strictly decreasing in its exponent, minimizing the terminal tumor volume
$V(T)$ is equivalent to maximizing the functional
\[
J(D) = \int_0^T \Psi(t)D(t)\,dt
\]
over all admissible controls.

Suppose that $D^*(t)$ is an optimal control. We show that the radiation dose must be concentrated at times where the radiosensitivity $\Psi(t)$ is largest. Assume that there exist measurable sets $A,B\subset[0,T]$ of positive measure such that
\[
\Psi(t)>\Psi(s) \quad \text{for all } t\in A,\ s\in B,
\]
and
\[
D^*(t) < D_{\max} \quad \text{for } t\in A, 
\qquad
D^*(s) > 0 \quad \text{for } s\in B.
\]

Choose $\varepsilon>0$ sufficiently small so that the modified control
\[
\widetilde D(t) =
\begin{cases}
D^*(t)+\varepsilon, & t\in A,\\
D^*(t)-\varepsilon, & t\in B,\\
D^*(t), & \text{otherwise},
\end{cases}
\]
remains admissible. By choosing the sets $A$ and $B$ with equal measure, the total dose constraint is preserved.

The corresponding change in the objective functional is
\[
J(\widetilde D) - J(D^*)
= \varepsilon
\left(
\int_A \Psi(t)\,dt
-
\int_B \Psi(t)\,dt
\right).
\]
Since $\Psi(t)>\Psi(s)$ for $t\in A$ and $s\in B$, the right-hand side is positive, contradicting the optimality of $D^*$.

Therefore such sets cannot exist. Consequently, if radiation is administered at some time $t$, then it must be delivered at the
maximum allowable rate during periods where $\Psi(t)$ is larger. This implies the existence of a threshold $\lambda$ such that
\[
D^*(t)=D_{\max} \quad \text{when } \Psi(t)>\lambda,
\qquad
D^*(t)=0 \quad \text{when } \Psi(t)<\lambda.
\]

Finally, the value of $\lambda$ is determined by the total dose constraint. If the entire dose budget is used, $\lambda$ is chosen so that
\[
\int_0^T D^*(t)\,dt = D_{\mathrm{total}}.
\]
If the dose budget satisfies $D_{\mathrm{total}} \ge T D_{\max}$, the constraint is inactive and the optimal schedule is simply
$D^*(t) = D_{\max}$ for all $t\in[0,T]$.
\end{proof}

\begin{remark}
The bang-bang structure of the optimal control is consistent with Pontryagin's Minimum Principle. Because the control $D(t)$ enters the dynamics linearly and the admissible control set is bounded, the Hamiltonian of the corresponding optimal control problem is linear in $D(t)$. In such cases Pontryagin's principle predicts that the optimal control lies on the boundary of the admissible set, leading to bang-bang strategies. The argument above provides a direct verification of this structure for the present model.
\end{remark}

\subsection{Immunotherapy Dynamics}
Consider the coupled tumor-immune dynamics defined on the positive orthant $\mathbb{R}^2_+$ (see \cite{Kuznetsov1994})
\begin{align}\label{immuno}
\frac{dV}{dt} &= r V \ln\left(\frac{K}{V}\right) - \frac{\delta E V}{m + V} \\
\frac{dE}{dt} &= s + \frac{\rho E V^2}{\eta^2 + V^2} - \mu E, \notag 
\end{align}
where $E$ is the effector immune cell density, $s$ is the constant baseline influx of immune cells into the tumor site, $\rho$ is the rate of antigen-stimulated immune cell proliferation, $\eta$ is the tumor volume at which immune recruitment is half-maximal, $\mu$ is the natural death and functional exhaustion rate of immune cells, $\delta$ is the per-capita tumor cell kill rate by effector cells, and $m$ is the saturation constant for the immune-mediated destruction of tumor cells.

\begin{theorem}
\label{thm:immuno}
Consider the system \eqref{immuno}. Any positive equilibrium
$(V^*,E^*)$ satisfies
\[
E^*=\frac{s(\eta^2+V^{*2})}{\mu(\eta^2+V^{*2})-\rho V^{*2}},
\]
provided the denominator is positive. Moreover, if the Jacobian matrix
$J$ at $(V^*,E^*)$ satisfies
\[
\operatorname{trace}(J)<0
\qquad\text{and}\qquad
\det(J)>0,
\]
then $(V^*,E^*)$ is locally asymptotically stable.
\end{theorem}

\begin{proof}
A positive equilibrium $(V^*,E^*)$ is obtained by solving
\[
\frac{dV}{dt}=0,\qquad \frac{dE}{dt}=0.
\]
From the second equation of \eqref{immuno}, we obtain
\[
s+\frac{\rho E^*V^{*2}}{\eta^2+V^{*2}}-\mu E^*=0,
\]
which gives
\[
E^*=\frac{s(\eta^2+V^{*2})}{\mu(\eta^2+V^{*2})-\rho V^{*2}},
\]
provided $\mu(\eta^2+V^{*2})-\rho V^{*2}>0$.

The Jacobian matrix at $(V^*,E^*)$ is
\[
J=
\begin{pmatrix}
r\ln\!\left(\frac{K}{V^*}\right)-r-\dfrac{\delta E^*m}{(m+V^*)^2}
&
-\dfrac{\delta V^*}{m+V^*}
\\[3mm]
\dfrac{2\rho\eta^2E^*V^*}{(\eta^2+V^{*2})^2}
&
\dfrac{\rho V^{*2}}{\eta^2+V^{*2}}-\mu
\end{pmatrix}.
\]
Since the system is planar, the Routh--Hurwitz criterion implies that
the equilibrium $(V^*,E^*)$ is locally asymptotically stable if and
only if
\[
\operatorname{trace}(J)<0
\qquad\text{and}\qquad
\det(J)>0.
\]
This proves the claim.
\end{proof}

\subsection{Model Validation and Clinical Correlation}

To study the relevance of the proposed model, we compared its qualitative behavior with general clinical features reported for Undifferentiated Pleomorphic Sarcoma.

The model reproduces several features commonly observed in UPS. In particular, it captures (i) reduction in tumor burden after surgery together with residual microscopic disease, (ii) an early postoperative inflammatory phase, and (iii) later tumor regrowth influenced by immune response and treatment effects.

Clinical studies and reviews on UPS report aggressive tumor behavior, recurrence after treatment, and the importance of surgery, radiotherapy, and immune-related mechanisms in disease management \cite{Crago2022, Sun2023}. The behavior predicted by the model is qualitatively consistent with these general clinical patterns.

Therefore, the present comparison should be viewed as qualitative validation rather than formal statistical validation. The main goal is to examine whether the model reproduces biologically and clinically reasonable patterns of tumor progression and treatment response.

\section{Conclusion}

In this paper, we developed a mathematical model for the growth and treatment of Undifferentiated Pleomorphic Sarcoma (UPS). The model combines tumor growth, surgery, postoperative recovery, immune response, and radiation treatment within a single framework. The goal was to understand how these different processes affect tumor growth, recurrence, and treatment outcome.

Our analysis gives several important results. The modified growth equation shows that if the tumor becomes too small, necrotic loss can dominate growth and the tumor may disappear. The model also shows that tumor growth stays below a strict upper bound and approaches a stable size over time.

The surgical part of the model describes tumor removal together with residual microscopic disease. The analysis shows that even after successful surgery, a small remaining tumor burden may lead to later recurrence. This highlights the importance of residual disease in UPS treatment.

For the postoperative stage, the model predicts an early inflammatory phase followed by a recovery and regrowth phase. We
obtained an explicit solution for the inflammatory dynamics and found conditions under which immune clearance or hypoxic stress may suppress tumor regrowth. A switching mechanism was introduced to describe the smooth transition between these stages.

The tumor-immune model shows that long-term tumor behavior depends on the balance between tumor growth and immune activity. Stability analysis identifies conditions under which the tumor may be controlled or persist. For radiation treatment, we proved that the optimal strategy is of bang-bang type. This means that, under the model assumptions, radiation should be delivered at the maximum allowed dose during periods when the tumor is most sensitive.

The numerical simulations show how biological and treatment parameters influence tumor growth, recurrence, and treatment response. Although the model is not designed for patient-specific prediction, its behavior agrees qualitatively with several known clinical features of UPS, including recurrence after incomplete surgery and sensitivity to treatment timing.

Overall, the model provides a simple mathematical setting to study tumor growth, surgery, immune effects, and treatment timing together. The results suggest that tumor recurrence depends not only on growth itself, but also on residual disease, postoperative dynamics, immune response, and treatment scheduling. Future work may include stochastic effects, spatial variation, and patient-specific parameter estimation.

\section*{Acknowledgment}
The author would like to thank the reviewers for their careful reading and constructive comments, which helped improve the clarity and presentation of this work. The author is supported by the INSPIRE faculty fellowship (Ref No.: IFA22-MA 186) funded by the DST, Govt. of India.


\begin{thebibliography}{99}


\bibitem{Benzekry2014}
Benzekry, S., et al. (2014).
Classical mathematical models for description and prediction of experimental tumor growth.
\textit{PLoS Computational Biology}, \textbf{10}(8), e1003800.

\bibitem{Butner2022}
Butner, J. D., Dogra, P., Chung, C., Pasqualini, R., Arap, W., Lowengrub, J., Cristini, V., and Wang, Z. (2022).
Mathematical modeling of cancer immunotherapy for personalized clinical translation.
\textit{Nature Computational Science}, \textbf{2}(12), 785--796.

\bibitem{Chamseddine2020}
Chamseddine, I. M., and Rejniak, K. A. (2020).
Hybrid modeling frameworks of tumor development and treatment.
\textit{WIREs Systems Biology and Medicine}, \textbf{12}(1), e1461.

\bibitem{Crago2022}
Crago, A. M., Cardona, K., Kose{\l}a-Paterczyk, H., and Rutkowski, P. (2022).
Management of myxofibrosarcoma and undifferentiated pleomorphic sarcoma.
\textit{Surgical Oncology Clinics of North America}, \textbf{31}(3), 419--430.

\bibitem{Dalal2024}
Dalal, S., Shan, K. S., Thaw Dar, N. N., Hussein, A., and Ergle, A. (2024).
Role of immunotherapy in sarcomas.
\textit{International Journal of Molecular Sciences}, \textbf{25}(2), 1266.

\bibitem{Eilber2004}
Eilber, F. C., et al. (2004).
Validation of the postoperative nomogram for 12-year sarcoma-specific mortality.
\textit{Cancer}, \textbf{101}(10), 2270--2275.

\bibitem{Hall2018}
Hall, E. J., and Giaccia, A. J. (2018).
\textit{Radiobiology for the Radiologist}.
8th ed., Lippincott Williams \& Wilkins.

\bibitem{Kutuva2023}
Kutuva, A. R., Caudell, J. J., Yamoah, K., Enderling, H., and Zahid, M. U. (2023).
Mathematical modeling of radiotherapy: impact of model selection on estimating minimum radiation dose for tumor control.
\textit{Frontiers in Oncology}, \textbf{13}, 1130966.

\bibitem{Kuznetsov1994}
Kuznetsov, V. A., Makalkin, I. A., Taylor, M. A., and Perelson, A. S. (1994).
Nonlinear dynamics of immunogenic tumors: parameter estimation and global bifurcation analysis.
\textit{Bulletin of Mathematical Biology}, \textbf{56}(2), 295--321.

\bibitem{Laird1964}
Laird, A. K. (1964).
Dynamics of tumor growth.
\textit{British Journal of Cancer}, \textbf{18}(3), 490--502.

\bibitem{Lambin2012}
Lambin, P., et al. (2012).
Radiomics: extracting more information from medical images using advanced feature analysis.
\textit{European Journal of Cancer}, \textbf{48}(4), 441--446.

\bibitem{Norton1988}
Norton, L. (1988).
A Gompertzian model of human breast cancer growth.
\textit{Cancer Research}, \textbf{48}(24), 7067--7071.

\bibitem{Rockne2019Roadmap}
Rockne, R. C., Hawkins-Daarud, A., Swanson, K. R., et al. (2019).
The 2019 mathematical oncology roadmap.
\textit{Physical Biology}, \textbf{16}(4), 041005.

\bibitem{SinghPaquin2024}
Singh, D., and Paquin, D. (2024).
Modeling free tumor growth: discrete, continuum, and hybrid approaches to interpreting cancer development.
\textit{Mathematical Biosciences and Engineering}, \textbf{21}(7), 6659--6693.

\bibitem{Sun2023}
Sun, H., Liu, J., Hu, F., Xu, M., Leng, A., Jiang, F., and Chen, K. (2023).
Current research and management of undifferentiated pleomorphic sarcoma/myxofibrosarcoma.
\textit{Frontiers in Genetics}, \textbf{14}, 1109491.

\bibitem{Swanson2008}
Swanson, K. R., Rostomily, R. C., and Alvord, E. C. Jr. (2008).
A mathematical modelling tool for predicting survival of individual patients following resection of glioblastoma: a proof of principle.
\textit{British Journal of Cancer}, \textbf{98}(1), 113--119.

\bibitem{Wheldon1988}
Wheldon, T. E. (1988).
\textit{Mathematical Models in Cancer Research}.
Adam Hilger, Bristol and Philadelphia.

\bibitem{Winchester2018}
Winchester, D. S., et al. (2018).
Undifferentiated pleomorphic sarcoma: factors predictive of adverse outcomes.
\textit{Journal of the American Academy of Dermatology}, \textbf{79}(5), 853--859.

\bibitem{Yin2019}
Yin, A., Moes, D. J. A. R., van Hasselt, J. G. C., Swen, J. J., and Guchelaar, H. J. (2019).
A review of mathematical models for tumor dynamics and treatment resistance evolution of solid tumors.
\textit{CPT: Pharmacometrics \& Systems Pharmacology}, \textbf{8}(10), 720--737.

\bibitem{Zheng2025}
Zheng, D., et al. (2025).
Mathematical modeling in radiotherapy for cancer.
\textit{Radiation Oncology}, \textbf{20}, 60.

\end{thebibliography}
\end{document}